\definecolor{darkred}{rgb}{1,0,0} %can change the intensity in [0,1]
\definecolor{darkgreen}{rgb}{0,0.8,0}
\definecolor{darkblue}{rgb}{0,0,1}
\newcommand{\labell}[1] {\label{#1}}
\numberwithin{equation}{section}
\newtheorem {Theorem}{Theorem}
\numberwithin{Theorem}{section}
\newtheorem {Corollary}[Theorem]{Corollary}
\theoremstyle{definition}
\theoremstyle{remark}
\newtheorem{Remark}[Theorem]{Remark}
\newtheorem{Example}[Theorem]{Example}
\chardef\csname pre amssym.def at\endcsname=\the\catcode`\@
\def\undefine#1{\let#1\undefined}
\def\newsymbol#1#2#3#4#5{\let\next@\relax
 \ifnum#2=\@ne\let\next@\msafam@\else
 \ifnum#2=\tw@\let\next@\msbfam@\fi\fi
 \mathchardef#1="#3\next@#4#5}
\def\mathhexbox@#1#2#3{\relax
 \ifmmode\mathpalette{}{\m@th\mathchar"#1#2#3}%
 \else\leavevmode\hbox{$\m@th\mathchar"#1#2#3$}\fi}
\def\hexnumber@#1{\ifcase#1 0\or 1\or 2\or 3\or 4\or 5\or 6\or 7\or 8\or
 9\or A\or B\or C\or D\or E\or F\fi}
\font\teneufm=eufm10
\font\seveneufm=eufm7
\font\fiveeufm=eufm5
\newcommand{\CR}{{\mathcal R}}
\def    \C      {{\mathbb C}}
\def    \R      {{\mathbb R}}
\def    \Z      {{\mathbb Z}}
\def    \N      {{\mathbb N}}
\def    \Q      {{\mathbb Q}}
\def    \T      {{\mathbb T}}
\def    \CP     {{\mathbb C}{\mathbb P}}
\def    \12    {{\frac{1}{2}}}
\def    \bD    {\bar{\Delta}}
\def    \codim  {\operatorname{codim}}
\def    \rk     {\operatorname{rk}}
\def    \HF     {\operatorname{HF}}
\def    \HC     {\operatorname{HC}}
\def    \CC     {\operatorname{CC}}
\def    \by     {\bar{y}}
\def    \va     {\vec{a}}
\def    \vb     {\vec{b}}
\def    \MUCZ  {\operatorname{\mu_{\scriptscriptstyle{CZ}}}}
\def    \hMUCZ  {\operatorname{\hat{\mu}_{\scriptscriptstyle{CZ}}}}
\def    \MUM  {\operatorname{\mu_{\scriptscriptstyle{Morse}}}}
\def    \ssminus        {\smallsetminus}
\begin{document}

%%%%%%%%%%%%%%%%%%%%%%%%%%%%%%
%   TEXT FORMATTING

\setlength{\smallskipamount}{6pt}
\setlength{\medskipamount}{10pt}
\setlength{\bigskipamount}{16pt}

%%%%%%%%%%%%%%%%%%%%%%%%%%

%%%%%%%%%%%%%%%%%%%%%%%%%%

%%%%%%%%%%%           BEGINNING OF  TEXT

%%%%%%%%%%%%%%%%%%%%%%%%%%

\title[Resonances for Hamiltonian Diffeomorphisms and Reeb Flows]{Homological 
Resonances for Hamiltonian Diffeomorphisms and Reeb Flows}

\author[Viktor Ginzburg]{Viktor L. Ginzburg}
\author[Ely Kerman]{Ely Kerman}

\address{VG: Department of Mathematics, UC Santa Cruz,
Santa Cruz, CA 95064, USA}
\email{ginzburg@math.ucsc.edu}

\address{EK: Department of Mathematics, University of Illinois at 
Urbana-Champaign, Urbana, IL 61801, USA}
\email{ekerman@math.uiuc.edu}

\subjclass[2000]{53D40, 37J45, 70H12}
%\keywords{Periodic orbits, Hamiltonian flows, Floer homology, 
%Conley conjecture}
\date{\today} \thanks{The work is partially supported by the NSF and
by the faculty research funds of the University of California, Santa
Cruz.}

%\bigskip

\begin{abstract} We show that whenever a Hamiltonian diffeomorphism or
  a Reeb flow has a finite number of periodic orbits, the mean indices
  of these orbits must satisfy a resonance relation,
  provided that the ambient manifold meets some natural 
  requirements. In the case of Reeb flows, this leads to simple expressions
  (purely in terms of the mean indices) for the mean Euler characteristics.
  These are invariants of the underlying contact structure which are capable of
  distinguishing some  contact structures that are  homotopic but not 
  diffeomorphic.
\end{abstract}

\maketitle

\tableofcontents

\section{Introduction and main results }
\labell{sec:main-results}

\subsection{Introduction}
\label{sec:intro}

In this paper, we establish new restrictions on Hamiltonian
diffeomorphisms and Reeb flows which have only finitely many periodic
orbits.  While these dynamical systems are rare, there are many
natural examples, such as irrational rotations of the two-dimensional
sphere and Reeb flows on irrational ellipsoids. Moreover, these
systems serve as important counterpoints to cases where one can prove
the existence of infinitely many periodic orbits, see for example
\cite{E,EH,FrHa,Hi,Gi:conley,GG:gap,GG:gaps,SZ,Vi}. Our main theorems
establish resonance relations for the mean indices of the periodic
orbits of these systems when the ambient manifolds meet some
additional requirements.

For a Hamiltonian diffeomorphism $\varphi$, the additional requirement
on the ambient symplectic manifold is that $N\geq n+1$, where $N$ is
the minimal Chern number and $n$ is half the dimension.  A resonance
relation in this case is simply a linear relation, with integer
coefficients, between the mean indices $\Delta_i$, viewed as elements
of $\R/2N\Z$.  The existence of these relations can be established
essentially by carrying out an argument from \cite{SZ} modulo $2N$,
cf.\ \cite{BCE}.
The specific form of the resonance relations established depends on
$\varphi$, although conjecturally the relation $\sum\Delta_i=0\mod 2N$
is always satisfied.

For Reeb flows, we show that certain sums of the reciprocal mean
indices are equal to the (positive/negative) mean Euler
characteristic, an invariant of the contact structure defined via
cylindrical contact homology, when it itself is defined. (This
relation generalizes the one discovered by Ekeland and Hofer \cite{E,EH} and
Viterbo \cite{Vi} for convex and star-shaped hypersurfaces in
$\R^{2n}$, which served as the main motivation for the present work.)
One can view these resonance relations as new expressions for the mean
Euler characteristics written purely in the terms of the mean
indices.  As is shown below in Example \ref{ex:U}, these invariants
can be used to distinguish some contact structures which are homotopic
but not diffeomorphic, such as those distinguished by Ustilovsky in
\cite{U} using cylindrical contact homology.

One forthcoming application of our results is the $C^\infty$-generic
existence of infinitely many periodic orbits for Hamiltonian
diffeomorphisms or Reeb flows established under the same hypotheses as
the resonance relations; see \cite{GG:generic}. For the Reeb flows,
these results generalize the $C^\infty$-generic existence of
infinitely many closed characteristics on convex hypersurfaces in
$\R^{2n}$ (see \cite{E}) and the $C^\infty$-generic existence of
infinitely many closed geodesics (see \cite{Ra1,Ra2}).

\subsection{Resonances for Hamiltonian diffeomorphisms}
\label{sec:res-hd}
Let $(M^{2n},\omega)$ be a closed symplectic manifold, which throughout
this paper is assumed to be weakly monotone; see, e.g., \cite{HS} or
\cite{MS} for the definition. Denote by $N$ the minimal Chern number
of $(M, \omega)$, i.e., $N$ is the positive generator of the subgroup
$\left<c_1(TM),\pi_2(M)\right>$ of  $\Z$. (When
$c_1(TM)\mid_{\pi_2(M)}=0$, we set
$N=\infty$.) Recall that $(M, \omega)$ is said to be \emph{rational} if the group 
$\left<\omega,\pi_2(M)\right>\subset \R$ is discrete.

A Hamiltonian $H\colon \R/\Z\times M\to \R$ which is one-periodic in
time, determines a vector field $X_H$ on $M$ via Hamilton's equation
$i_{X_H} \omega = -dH$.  Let $\varphi=\varphi_H$ be the Hamiltonian
diffeomorphism of $M$ given by the time-$1$ flow of $X_H$.  Recall
that there is a one-to-one correspondence between $k$-periodic points
of $\varphi$ and $k$-periodic orbits of $H$. In this paper, we
restrict our attention exclusively to periodic points of $\varphi$
such that the corresponding periodic orbits of $H$ are
contractible. (One can show that contractibility of the orbit
$\varphi^t(x)$ of $H$ through a periodic point $x\in M$ is completely
determined by $x$ and $\varphi$ and is independent of the choice of
generating Hamiltonian $H$.)  To such a periodic point $x$, we
associate the mean index $\Delta(x)$, which is viewed here as a point
in $\R/2N\Z$, and hence is independent of the choice of capping of the
orbit. The mean index measures the sum of rotations of the eigenvalues
on the unit circle of the linearized flow $d\varphi^t_H$ along $x$.
The reader is referred to \cite{SZ} for the definition of the mean
index $\Delta$; see also, e.g., \cite{GG:CMH} for a detailed
discussion. We only mention here that
$$
|\Delta(x) - \hMUCZ(x)| \leq n,
$$
where $\hMUCZ(x)$ is the Conley--Zehnder index of $x$, viewed as a
point in $\R/2N\Z$ and given a nonstandard normalization such that for
a critical point $x$ of a $C^2$-small Morse function one has
$\hMUCZ(x) = \MUM(x) -n \mod 2N$.

A Hamiltonian diffeomorphism $\varphi$ is said
to be \emph{perfect} if it has finitely many (contractible) periodic
points, and every periodic point of $\varphi$ is a fixed point. Let
$\Delta_1,\ldots, \Delta_m$ be the collection of the mean indices of the fixed
points of a perfect Hamiltonian diffeomorphism $\varphi$ with exactly
$m$ fixed points.  A \emph{resonance} or  \emph{resonance relation}
is a vector $\va=(a_1,\ldots,a_m)\in\Z^m$ such that
$$
a_1\Delta_1+\ldots+a_m\Delta_m=0\mod 2N.
$$
It is clear that resonances form a free abelian group 
$\CR=\CR(\varphi)\subset \Z^m$. 

\begin{Theorem}
\label{thm:res}
Assume that $n+1\leq N<\infty$. 
\begin{itemize}

\item[(i)] Then $\CR\neq 0$, i.e., the mean indices $\Delta_i$ satisfy at least
  one non-trivial resonance relation.

\item[(ii)] Assume in addition that there is only one resonance, i.e.,
  $\rk\CR=1$, and let $\va=(a_1,\ldots,a_m)$ be a generator of $\CR$
  with at least one positive component. Then all components of $\va$ are
  non-negative, i.e., $a_i\geq 0$ for all $i$, and
$$
\sum a_i \leq \frac{N}{N-n}.
$$

\item[(iii)] Furthermore, assume that $(M, \omega)$ is rational. Then assertion
  (i) holds when only irrational mean indices are considered (i.e.,
  the irrational mean indices satisfy a non-trivial resonance
  relation) and assertion (ii) holds when only non-zero mean indices
  are considered.
\end{itemize}

\end{Theorem}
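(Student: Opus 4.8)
The plan is to study the Floer homologies of \emph{all} the iterates $\varphi^k$ simultaneously. Since $\varphi$ is perfect, $\Fix(\varphi^k)=\{x_1,\dots,x_m\}$ for every $k\ge 1$; these are isolated fixed points of $\varphi^k$, and $x_i$, viewed as a fixed point of $\varphi^k$, has mean index $k\Delta_i\in\R/2N\Z$. I will use two standard inputs. First, since $M$ is weakly monotone, $\HF_d(\varphi^k)\cong H_{d+n}(M)$ as $\Z/2N\Z$-graded vector spaces for every $k$ (in the normalization of the excerpt), so in particular $\HF_{-n}(\varphi^k)\cong H_0(M)$ and $\HF_n(\varphi^k)\cong H_{2n}(M)$ are nonzero. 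Second, the local Floer homology $\operatorname{HF}^{\mathit{loc}}_*(\varphi^k,x_i^k)$ of an isolated fixed point is defined, it is supported in degrees lying in the arc $[\,k\Delta_i-n,\ k\Delta_i+n\,]$ of $\R/2N\Z$ (a proper arc since $2n<2N$), and there is a spectral sequence from $\bigoplus_i\operatorname{HF}^{\mathit{loc}}_*(\varphi^k,x_i^k)$ to $\HF_*(\varphi^k)$. Hence, for every degree $d$ and every $k$,
\[
\dim H_{d+n}(M)\ \le\ \dim\HF_d(\varphi^k)\ \le\ \sum_{i=1}^m\dim\operatorname{HF}^{\mathit{loc}}_d(\varphi^k,x_i^k),
\]
where the $i$-th summand on the right vanishes unless $k\Delta_i\in[d-n,\,d+n]\bmod 2N$.

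For (i), note that $\CR=0$ is equivalent to $1,\Delta_1/2N,\dots,\Delta_m/2N$ being $\Q$-linearly independent, which by Kronecker's theorem is equivalent to the orbit $\{k\bar\Delta\bmod 2N:k\in\N\}$ of $\bar\Delta=(\Delta_1,\dots,\Delta_m)$ being dense in $\TT:=(\R/2N\Z)^m$. Assume $\CR=0$. Since $[-2n,0]\subset\R/2N\Z$ is a proper closed subset, the set $\{\theta\in\TT:\theta_i\notin[-2n,0]\text{ for all }i\}$ is nonempty and open, so there is $k\in\N$ with $k\Delta_i\notin[-2n,0]\bmod 2N$ for every $i$. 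For this $k$ the displayed inequality in degree $d=-n$ gives $0=\sum_i\dim\operatorname{HF}^{\mathit{loc}}_{-n}(\varphi^k,x_i^k)\ge\dim\HF_{-n}(\varphi^k)=\dim H_0(M)=1$, a contradiction. Hence $\CR\ne 0$.

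For (ii), assume $\rk\CR=1$, say $\CR=\Z\va$. Applying the displayed inequality in degree $d=-n$ to every $k\in\Z$ (note $\varphi^{-1}$ is again perfect, with mean indices $-\Delta_i$, so negative iterates are allowed) shows that for each $k$ some $k\Delta_i$ lies in $[-2n,0]\bmod 2N$; since $\{k\bar\Delta:k\in\Z\}$ is dense in its closure $H$ and $\bigcup_i\{\theta:\theta_i\in[-2n,0]\}$ is closed, we get $H\subseteq\bigcup_i\{\theta:\theta_i\in[-2n,0]\}$, and likewise $H\subseteq\bigcup_i\{\theta:\theta_i\in[0,2n]\}$ from degree $d=n$. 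Now $H$ is a closed subgroup of $\TT$ whose character annihilator is exactly $\CR$, so $\dim H=m-\rk\CR=m-1$ and $H$ is a finite union of cosets of a codimension-one subtorus. The \emph{crux} is then a combinatorial lemma: such a subgroup $H$ (with annihilator $\Z\va$) cannot be covered by the coordinate slabs $\{\theta_i\in[-2n,0]\}$, nor by the slabs $\{\theta_i\in[0,2n]\}$, unless all $a_i$ have the same sign and $\sum|a_i|\le N/(N-n)$. The one-variable case already exhibits the mechanism and the constant: if $m=1$ and $\Delta_1\not\equiv 0$ then $\CR=\Z a_1$ with $a_1\ge 2$ and $\Delta_1\equiv 2Np/a_1$, $\gcd(p,a_1)=1$, so the points $k\Delta_1\bmod 2N$ run through $\{2Nj/a_1:0\le j<a_1\}$, each of which must lie in $[-2n,0]$; taking $j=1$ forces $2N/a_1\ge 2N-2n$, i.e.\ $a_1\le N/(N-n)$. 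I expect the general case of this lemma — parametrizing $H$ over $\TT^{m-1}$ via a primitive completion of $\va$ so that each coordinate function becomes an affine character, then running a winding-number/pigeonhole count on an auxiliary circle, and dealing with the disconnected case and with coordinates on which $H$ is constant — to be the principal technical obstacle, both in formulating the lemma cleanly and in pinning down the sharp constant.

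For (iii), the extra hypothesis that $(M,\omega)$ is rational makes the action spectrum of every Hamiltonian closed and nowhere dense; this is the input needed to control the Floer homology of high iterates of an isolated orbit whose mean index is rational (in particular zero), so that the contribution of such an orbit can be isolated and discarded from the bookkeeping above. Rerunning the arguments of (i) and (ii) with only the irrational (respectively, non-zero) coordinates of $\bar\Delta$ being steered into the complement of the bad arcs — the finitely many remaining coordinates no longer obstructing the contradiction — then yields the stated refinements.
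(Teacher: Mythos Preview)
Your argument for (i) is correct and is exactly the paper's: the closure $\Gamma$ of $\{k\bar\Delta\}$ avoids an open cube in the torus, so $\Gamma\neq\T^m$ and $\CR\neq0$. For (ii) you arrive at the same geometric statement --- a codimension-one closed subgroup $\Gamma$ disjoint from the open cube $\Pi=(n/N,1)^m=(-1+n/N,0)^m$ in $\T^m$ --- but then stop, calling the residual ``combinatorial lemma'' the principal technical obstacle and proposing a parametrization and winding-number/pigeonhole argument. This is where you have overcomplicated matters: the lemma is two lines, and no case analysis on connected components is needed. For the signs, observe that near the origin $\Gamma$ coincides with the linear hyperplane $\sum a_i\theta_i=0$; if the nonzero $a_i$ had mixed signs this hyperplane would meet the open negative orthant and hence $\Pi$, contradicting $\Gamma\cap\Pi=\emptyset$. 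For the bound, look at the single diagonal point $(t,\dots,t)$ with $t=-1/\sum a_i$: it satisfies $\va\cdot(t,\dots,t)=-1\equiv0\bmod 1$, so it lies in $\Gamma$ and therefore outside $\Pi$, which forces $|t|\ge 1-n/N$, i.e.\ $\sum a_i\le N/(N-n)$.

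For (iii) your sketch misses the mechanism entirely. Rationality is not used to make the action spectrum nowhere dense so as to ``control'' or ``discard'' contributions of zero-index orbits directly. The argument is this: for each $k$ the Floer input singles out some capped fixed point $\bar x_i^k$ with $\HF^{\mathit{loc}}_n(\varphi^k,\bar x_i^k)\neq0$ and $\Delta(\bar x_i^k)\in[0,2n]$. If $\Delta_i\equiv0\bmod 2N$ then, since $2n<2N$, necessarily $\Delta(\bar x_i^k)=0$ on the nose; together with nonvanishing of the local Floer homology in degree $n$ this says precisely that $\bar x_i^k$ is a \emph{symplectically degenerate maximum} of $\varphi^k$. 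A theorem of Ginzburg--G\"urel (this is where rationality of $(M,\omega)$ is used) then forces infinitely many periodic points, contradicting perfectness. Hence the index $i$ produced by the Floer argument can never have $\Delta_i=0$, so those coordinates may simply be deleted from the proofs of (i) and (ii). Finally, passing to a suitable iterate $\varphi^k$ turns every rational $\Delta_i$ into $0$, and since resonances of $\varphi^k$ are also resonances of $\varphi$, this yields the statement for irrational indices.
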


We require $(M, \omega)$ to be weakly monotone here only for the sake of
simplicity: this condition can be eliminated by utilizing the
machinery of virtual cycles. Likewise, the hypothesis that $(M, \omega)$ is
rational in (iii) is purely technical and probably
unnecessary. However, the proof of (iii) relies on a result from
\cite{GG:gaps} which has so far been established only for rational
manifolds although one can expect it to hold without this requirement;
see \cite[Remark 1.19]{GG:gaps}.  When $N=\infty$, i.e.,
$c_1(TM)\mid_{\pi_2(M)}=0$, perfect Hamiltonian diffeomorphisms probably
do not exist and the assertion of the theorem is void. For instance,
if $(M, \omega)$ is rational and $N=\infty$, every Hamiltonian
diffeomorphism has infinitely many periodic points; see
\cite{GG:gaps}.

We note that every $\Delta_i\in\Q$ (e.g., $\Delta_i=0$)
automatically gives rise to an infinite cyclic subgroup of
resonances. Thus, assertion (iii) is much more precise than (i) or (ii) in the
presence of rational or zero mean indices.

Finally we observe that the condition that $\varphi$ is perfect can be
relaxed and replaced by the assumption that $\varphi$ has finitely
many periodic points. Indeed, in this case, suitable iterations
$\varphi^k$ are perfect. Applying Theorem \ref{thm:res} to such a 
$\varphi^k$, we then obtain resonance relations involving (appropriately
normalized) mean indices of all periodic points of $\varphi$.

\begin{Example} 
\label{ex:CPn}
Let $\varphi$ be the Hamiltonian diffeomorphism of $\CP^n$ generated
by a quadratic Hamiltonian
$H(z)=\pi\big(\lambda_0|z_0|^2+\ldots+\lambda_n|z_n|^2\big)$, where
the coefficients $\lambda_0,\ldots,\lambda_n$ are all distinct. (Here,
we have identified $\CP^n$ with the quotient of the unit sphere in
$\C^{n+1}$. Recall also that $N=n+1$ for $\CP^n$.)  Then, the
Hamiltonian diffeomorphism $\varphi=\varphi_H$ is perfect and has
exactly $n+1$ fixed points (the coordinate axes). The mean indices are
$$
\Delta_i=\sum_j\lambda_j-(n+1)\lambda_i,
$$
where now $i=0,\ldots,n$. Thus, $\sum \Delta_i=0$ and this is the
only resonance relation for a generic choice of the coefficients: the
image of the map $(\lambda_0,\ldots,\lambda_n)\mapsto
(\Delta_0,\ldots,\Delta_n)$ is the hyperplane  $\sum \Delta_i=0$.

\end{Example}

More generally, we have

\begin{Example} 
\label{ex:Ham-gp-action}
Suppose that $(M, \omega)$ is equipped with a Hamiltonian torus action
with isolated fixed points; see, e.g., \cite{GGK,MS:intro} for the
definition and further details. A generic element of the torus gives
rise to a perfect Hamiltonian diffeomorphism $\varphi$ of $(M,
\omega)$ whose fixed points are exactly the fixed points of the torus
action.  One can show that in this case the mean indices again satisfy the resonance
relation $\sum \Delta_i=0$. (The authors are grateful to Yael Karshon
for a proof of this fact; \cite{Ka}.)

Examples of symplectic manifolds which admit such torus actions include the
majority of coadjoint orbits of compact Lie groups. One can also
construct new examples from a given one by equivariantly blowing-up
the symplectic manifold at its fixed points. The resulting symplectic
manifold always inherits a Hamiltonian torus action and, in many
instances, this action also has isolated fixed points.
\end{Example}

These examples suggest that, in the setting of Theorem \ref{thm:res},
the mean indices always satisfy the resonance relation $\sum
\Delta_i=0$. The next result can be viewed as preliminary step
towards proving this conjecture. 

\begin{Corollary}
\label{cor:CPn}
Let $\varphi$ be a perfect Hamiltonian diffeomorphism of $\CP^n$ such
that there is only one resonance, i.e., $\rk\CR=1$. Denote by $\va$ a
generator of $\CR$ as described in statement (ii) of Theorem
\ref{thm:res}, and assume that $a_i\neq 0$ for all $i$.  Then
$\varphi$ has exactly $n+1$ fixed points and $\va=(1,\ldots,1)$, i.e.,
the mean indices satisfy the resonance relation $\sum\Delta_i=0$.
\end{Corollary}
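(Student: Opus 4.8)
The plan is to combine the upper bound on $\sum a_i$ furnished by Theorem~\ref{thm:res}(ii) with the classical lower bound on the number of fixed points of a Hamiltonian diffeomorphism of $\CP^n$. Since $N=n+1$ for $\CP^n$, the hypothesis $n+1\leq N<\infty$ of Theorem~\ref{thm:res} holds, and because $\rk\CR=1$ part~(ii) applies to the generator $\va=(a_1,\dots,a_m)$ selected there: all of its components are non-negative and
\[
\sum_{i=1}^m a_i\ \leq\ \frac{N}{N-n}\ =\ \frac{n+1}{(n+1)-n}\ =\ n+1.
\]

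First I would invoke the Arnold conjecture for $\CP^n$ in the form that does not require non-degeneracy: every Hamiltonian diffeomorphism of $\CP^n$ has at least $n+1$ fixed points. (This is due to Fortune; it can also be derived from the Lusternik--Schnirelmann cup-length estimate, the cup-length of $\CP^n$ being $n$. Since $\CP^n$ is simply connected, there is no issue about contractibility of orbits.) As $\varphi$ is perfect, it has finitely many fixed points and no other periodic points, so $m\geq n+1$.

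The remaining step is purely arithmetic. The hypothesis that $a_i\neq 0$ for all $i$, together with the inequalities $a_i\geq 0$ from Theorem~\ref{thm:res}(ii), yields $a_i\geq 1$ for every $i=1,\dots,m$, and hence
\[
n+1\ \leq\ m\ \leq\ \sum_{i=1}^m a_i\ \leq\ n+1.
\]
Thus all of these inequalities are in fact equalities: $m=n+1$, and $\sum_{i=1}^{n+1} a_i=n+1$ is a sum of $n+1$ positive integers, which forces $a_i=1$ for every $i$. Consequently $\varphi$ has exactly $n+1$ fixed points, the group $\CR$ is generated by $(1,\dots,1)$, and the mean indices satisfy the resonance relation $\sum_i\Delta_i=0$ in $\R/2N\Z$.

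The only substantial ingredient here is the bound $m\geq n+1$; everything else is bookkeeping around the inequality in Theorem~\ref{thm:res}(ii), so I would expect no real obstacle beyond correctly citing the fixed-point estimate. The one subtlety worth flagging is that this estimate must be used in the form that applies to possibly degenerate fixed points --- namely the cup-length estimate for $\CP^n$, rather than the sum-of-Betti-numbers estimate, which would require non-degeneracy --- but since both of these numbers equal $n+1$ for $\CP^n$, no hypothesis on $\varphi$ beyond being Hamiltonian is needed.
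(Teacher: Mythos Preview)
Your proof is correct and follows exactly the same line as the paper's: invoke the Arnold conjecture for $\CP^n$ (Fortune, Fortune--Weinstein) to get $m\geq n+1$, use part~(ii) of Theorem~\ref{thm:res} with $N=n+1$ to get $a_i\geq 0$ and $\sum a_i\leq n+1$, and then squeeze using $a_i\geq 1$. The paper's proof is just a terser version of what you wrote.
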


\begin{proof}
  By the Arnold conjecture for $\CP^n$, we have $m\geq n+1$, see
  \cite{Fo,FW} and also \cite{F:c-l,Sc2}. By (ii), $a_i\neq 0$ means
  that $a_i\geq 1$. Hence, by (ii) again, $m=n+1$ and $a_i=1$ for all
  $i$ since $N=n+1$ and $\sum a_i\leq N/(N-n)=n+1$.
\end{proof}

Conjecturally, any Hamiltonian diffeomorphism of $\CP^n$
with more than $n+1$ fixed points has infinitely many periodic
points. (For $n=1$ this fact is established in \cite{FrHa}.) Corollary
\ref{cor:CPn} implies that that this is indeed the case, provided that
the mean indices satisfy exactly one resonance relation (i.e., $\rk
\CR=1$) and all components of the resonance relation are non-zero. (We
emphasize that by Theorem \ref{thm:res}, $\rk\CR\geq 1$.)

\begin{Remark}
 The resonances considered here are not the only numerical constraints 
 on  the fixed points of a perfect Hamiltonian diffeomorphism
  $\varphi\colon M\to M$. Relations of a different type,
  involving both the mean indices and action values, are established in
  \cite{GG:gaps} when $(M, \omega)$ is either monotone or negative
  monotone. For instance, it is proved there that the so-called augmented
  action takes the same value on all periodic points of $\varphi\colon
  \CP^n\to \CP^n$ whenever $\varphi$ has exactly $n+1$ periodic points.

  Note also that a perfect Hamiltonian diffeomorphism need not be
  associated with a Hamiltonian torus action as are the Hamiltonian
  diffeomorphisms in Examples \ref{ex:CPn} and
  \ref{ex:Ham-gp-action}. For instance, there exists a Hamiltonian
  perturbation $\varphi$ of an irrational rotation of $S^2$ with
  exactly three ergodic invariant measures: the Lebesgue measure and
  the two measures corresponding to the fixed points of $\varphi$;
  \cite{AK,FK}. Clearly, $\varphi$ is perfect and not conjugate to a
  rotation.
\end{Remark}

\begin{Remark}
  As is immediately clear from the proof, one can replace in Theorem
  \ref{thm:res} the collection $\Delta_1,\ldots,\Delta_m$ of the mean
  indices of the fixed points of $\varphi$ by the set of all distinct mean
  indices.  This is a refinement of the theorem, for an equality of
  two mean indices is trivially a resonance relation. Note also that,
  as a consequence of this refinement, all mean indices are distinct
  whenever $\rk \CR=1$.
\end{Remark}

\subsection{Resonances for Reeb flows} Let $(W^{2n-1},\xi)$ be a
closed contact manifold such that the \emph{cylindrical contact homology}
$\HC_*(W,\alpha)$ is defined. More specifically, we require $(W,\xi)$ to
admit a contact form $\alpha$ such that 
\begin{itemize}

\item[(CF1)] all periodic orbits of the Reeb flow of $\alpha$ are
  non-degenerate, and

\item[(CF2)] the Reeb flow of $\alpha$ has no contractible periodic orbits $x$ with
$|x|=\pm 1$ or $0$.

\end{itemize}
Here, $|x|=\MUCZ(x)+n-3$, where $\MUCZ(x)$ stands for the
Conley--Zehnder index of $x$ (with its standard normalization). For
the sake of simplicity, we also assume that
$c_1(\xi)=0$. Then $\HC_*(W,\xi)$ is the
homology of a complex $\CC_*(W,\alpha)$ which is generated (over a
fixed ground field, say, $\Z_2$) by certain periodic orbits of the
Reeb flow, and is graded via $|\cdot|$. To be more precise, the
generators of $\CC_*(W,\alpha)$ are all iterations of good Reeb orbits
and odd iterations of bad Reeb orbits (See the definitions below.)
The homology $\HC_*(W,\xi)$ is independent of $\alpha$ as long as
$\alpha$ meets requirements (CF1) and (CF2). The exact nature of the
differential on $\CC_*(W,\alpha)$ is inessential for our
considerations. We refer the reader to, for instance, \cite{Bo,BO,El}
and the references therein for a more detailed discussion of contact
homology.

Furthermore, assume that 
\begin{itemize}
\item[(CH)]  there are two integers $l_+$ and $l_-$, such that  the space 
$\HC_l(W,\xi)$ is finite-dimensional for
$l\geq l_+$ and $l\leq l_-$.  
\end{itemize}
In the examples considered here, the contact homology is finite
dimensional in all degrees and this condition is automatically
met. By analogy with the constructions from \cite{EH,Vi}, we set
\begin{equation}
\label{eq:Euler1}
\chi^\pm(W,\xi)=\lim_{N\to\infty}\frac{1}{N}
\sum_{l=l_\pm}^N(-1)^l\dim \HC_{\pm l}(W,\xi),
\end{equation}
provided that the limits exist.
Clearly, when $\HC_l(W,\xi)$ is finite--dimensional for all $l$, we have
$$
\frac{\chi^+(W,\xi)+\chi^-(W,\xi)}{2}=\chi(W,\xi)
:=
\lim_{N\to\infty}\frac{1}{2N+1}
\sum_{l=-N}^N(-1)^l\dim \HC_{l}(W,\xi).
$$
We call $\chi^\pm(W,\xi)$ the \emph{positive/negative mean Euler
characteristic} of $\xi$. Likewise, we call $\chi(W,\xi)$   the mean
Euler characteristic of $\xi$. (This invariant is also considered in
\cite[Section 11.1.3]{VK:thesis}.)

In what follows, we denote by $x^k$ the $k$th iteration of a periodic
orbit $x$ of the Reeb flow of $\alpha$ on $W$.  Recall that a simple
periodic orbit $x$ is called \emph{bad} if the linearized Poincar\'e
return map along $x$ has an odd number of real eigenvalues strictly
smaller than $-1$. Otherwise, the orbit is said to be
\emph{good}. (This terminology differs slightly from the standard
usage, cf.\ \cite{Bo,BO}.)  When the orbit $x$ is good, the parity of
the Conley--Zehnder indices $\MUCZ(x^k)$ is independent of $k$; if $x$
is bad, then the parity of $\MUCZ(x^k)$ depends on the parity of $k$.

To proceed, we now assume that
\begin{itemize}
\item[(CF3)] the Reeb flow of $\alpha$ has finitely many simple periodic orbits.
\end{itemize}
In contrast with (CF1) and (CH) and even (CF2), this is a very strong
restriction on $\alpha$. Denote the good simple periodic orbits of the
Reeb flow by $x_i$ and the bad simple periodic orbits by $y_i$. Then,
$\CC_*(W,\alpha)$ is generated by the $x_i^k$, for all $k$, together
with the $y_i^k$ for $k$ odd.  Whenever (CF3) holds, condition (CH) is
automatically satisfied with $l_-=-2$ and $l_+=2n-4$ . Moreover, in
this case the spaces $\CC_l(W,\alpha)$ are finite--dimensional. (This
fact can, for instance, be extracted from the proof of Theorem
\ref{thm:reeb}; see \eqref{eq:index1} and \eqref{eq:index2}.)
Likewise, all spaces $\CC_l(W,\alpha)$ (and hence $\HC_l(W,\xi)$) are
finite--dimensional, provided that all of the orbits $x_i$ and $y_i$
have non-zero mean indices. We denote the mean index of an orbit $x$
by $\Delta(x)$ and set
$\sigma(x)=(-1)^{|x|}=-(-1)^n(-1)^{\MUCZ(x)}$. In other words,
$\sigma(x)$ is, up to the factor $-(-1)^n$, the topological index of
the orbit $x$ or, more precisely, of the Poincar\'e return map of
$x$. 

\begin{Theorem}
\label{thm:reeb}
Assume that $\alpha$ satisfies conditions (CF1)--(CF3). Then the limits in
\eqref{eq:Euler1} exist and 
\begin{equation}
\label{eq:Euler2}
{\sum}^{\pm} \frac{\sigma(x_i)}{\Delta(x_i)}
+\frac{1}{2}{\sum}^{\pm} \frac{\sigma(y_i)}{\Delta(y_i)}=
\chi^{\pm}(W,\xi),
\end{equation}
where ${\sum}^{+}$ (respectively, ${\sum}^{-}$) stands for the sum over
all orbits with positive (respectively, negative) mean index.
\end{Theorem}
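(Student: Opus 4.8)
The plan is to compute the mean Euler characteristic $\chi^\pm(W,\xi)$ directly from the generators of the complex $\CC_*(W,\alpha)$, using the fact that under (CF3) every generator is an iterate $x_i^k$ of a good simple orbit (all $k\geq 1$) or an odd iterate $y_i^k$ of a bad simple orbit. Since the alternating sum $\sum_l (-1)^l \dim \HC_l$ over a finite window differs from $\sum_l (-1)^l \dim \CC_l$ only by boundary terms of the acyclic part of the complex, and the window length $N\to\infty$ kills those bounded error terms in the limit, it suffices to show
$$
\chi^{\pm}(W,\xi)=\lim_{N\to\infty}\frac{1}{N}\sum_{\substack{\text{generators }z\\ 0\le |z|\le N\ (\text{resp. }-N\le|z|\le 0)}}(-1)^{|z|}.
$$
So the whole theorem reduces to a counting problem: for each simple orbit, count its iterates whose grading $|\cdot|$ lies in a long interval, weighted by the sign $(-1)^{|z|}$, divide by $N$, and let $N\to\infty$.

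First I would recall the iteration formula for the Conley--Zehnder index, $\MUCZ(x^k)=k\Delta(x)+e(x^k)$ with $|e(x^k)|\leq n-1$ bounded uniformly in $k$ (this is the standard estimate comparing $\MUCZ$ with the mean index; see the inequality $|\Delta(x)-\hMUCZ(x)|\le n$ quoted in the introduction, in its non-degenerate Reeb form). Hence $|x^k|=k\Delta(x)+O(1)$. For a \emph{good} orbit $x_i$ with $\Delta(x_i)>0$: the iterates $x_i^k$ with $|x_i^k|\in[0,N]$ are those with $k$ up to roughly $N/\Delta(x_i)$, so there are $N/\Delta(x_i)+O(1)$ of them; and because $\sigma(x_i^k)=(-1)^{|x_i^k|}$ is independent of $k$ (the parity of $\MUCZ(x_i^k)$ is constant in $k$ for good orbits, as stated in the excerpt), each contributes the same sign $\sigma(x_i)$. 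Dividing by $N$ and taking the limit gives the term $\sigma(x_i)/\Delta(x_i)$ on the positive side, and nothing on the negative side; the sign of $\Delta(x_i)$ sorts each good orbit into exactly one of ${\sum}^{+}$ or ${\sum}^{-}$, which is why those are the two sums appearing in \eqref{eq:Euler2}. For a \emph{bad} orbit $y_i$, only odd iterates $y_i^k$ are generators, so there are $\tfrac12 N/\Delta(y_i)+O(1)$ of them in a window of length $N$; and for bad orbits the parity of $\MUCZ(y_i^k)$ depends on the parity of $k$, but restricted to odd $k$ it is again constant, so again all contribute the common sign $\sigma(y_i)$. This produces the $\tfrac12\,\sigma(y_i)/\Delta(y_i)$ terms. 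Summing the contributions of all finitely many simple orbits yields \eqref{eq:Euler2}, and in particular proves that the limits in \eqref{eq:Euler1} exist.

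The main obstacle is the passage from $\dim\CC_l$ to $\dim\HC_l$ --- i.e., justifying that replacing the complex by its homology does not change the mean Euler characteristic. Over any fixed finite interval $[l_\pm,N]$ this is not an equality but differs by the ranks of the differential entering and leaving the window, and one must argue these boundary corrections are $O(1)$ as $N\to\infty$, so that after dividing by $N$ they vanish. The clean way is to work on a half-line of degrees: condition (CH), automatically satisfied here with $l_-=-2$, $l_+=2n-4$, guarantees $\HC_l$ is finite-dimensional in each degree in the relevant range, and finite-dimensionality of each $\CC_l$ (which, as remarked in the excerpt, follows from the index estimates \eqref{eq:index1}--\eqref{eq:index2}) lets one run the standard telescoping/Euler-characteristic-of-a-complex argument degree by degree; the only subtlety is a uniform bound on how far the differential can shift degree, but since $\partial$ lowers $|\cdot|$ by exactly $1$ this shift is $1$, and the correction term is controlled by $\dim\CC_N$ plus $\dim\CC_{l_\pm-1}$, both $O(1)$ in the sense needed after dividing by $N$ (here one uses that $\dim\CC_l$ is bounded independently of $l$, which again follows from the linear growth $|x^k|\sim k\Delta(x)$: only boundedly many iterates of each of the finitely many simple orbits land in any single degree). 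The remaining steps --- the iteration index estimate, the good/bad parity bookkeeping, and the asymptotic count $\#\{k : |x^k|\le N\}=N/\Delta(x)+O(1)$ --- are routine.
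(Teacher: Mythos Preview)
Your proposal is correct and follows essentially the same approach as the paper's proof: both count generators of $\CC_*(W,\alpha)$ via the iteration estimate $|x^k|=k\Delta(x)+O(1)$, separate good and bad orbits to obtain contributions $\sigma(x_i)N/\Delta(x_i)+O(1)$ and $\sigma(y_i)N/(2\Delta(y_i))+O(1)$ respectively, and then pass from chains to homology using that the boundary correction is controlled by $\dim \CC_N=O(1)$. The only cosmetic difference is that the paper truncates the complex at $l_+=2n-4$ rather than at $0$; this choice guarantees via \eqref{eq:index2} that only orbits with strictly positive mean index appear in the truncated complex, which cleanly disposes of the case $\Delta=0$ (such orbits have all iterates in degrees strictly below $l_+$, hence contribute nothing to either side) --- you implicitly assume $\Delta\neq 0$ when you write ``the sign of $\Delta(x_i)$ sorts each good orbit into exactly one of $\sum^+$ or $\sum^-$,'' so you should either add a word about this or adopt the paper's window.
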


This theorem will be proved in Section \ref{sec:contact}. Here we only
mention that the specific nature of the differential on the complex
$\CC_*(W,\xi)$ plays no role in the argument. Also note that a similar
result holds when the homotopy classes of orbits are restricted to any
set of free homotopy classes closed under iterations, provided that
(CF1)--(CF3) hold for such orbits. For instance, \eqref{eq:Euler2}
holds when only contractible periodic orbits are taken into account in
the calculation of the left-hand side and the definition of
$\chi^\pm$.  Also it is worth pointing out that for non-contractible
orbits the definitions of the Conley--Zehnder and mean indices involve
some additional choices (see, e.g., \cite{Bo}) which effect both the
right- and the left-hand side of~\eqref{eq:Euler2}. 

\begin{Example}
\label{ex:standard}
Let $\xi_0$ be the standard contact structure on
$S^{2n-1}$. Then, as is easy to see, $\chi^-(S^{2n-1},\xi_0)=0$ and
$\chi^+(S^{2n-1},\xi_0)=1/2$. In this case, the resonance relations
\eqref{eq:Euler2} were proved in \cite{Vi}. (The case of a convex
hypersurface in $\R^{2n}$ was originally considered in \cite{E,EH}.)
\end{Example}

By definition \eqref{eq:Euler1}, the mean Euler characteristics
$\chi^\pm(W, \xi)$ are invariants of the contact structure
$\xi$. (Strictly speaking this is true only when $(W,\xi)$ is equipped
with some extra data or $W$ is simply connected.) Theorem
\ref{thm:reeb} implies that, whenever there is a contact form $\alpha$
for $\xi$ which satisfies conditions (CF1)--(CF3), these invariants
can, in principle, be calculated by purely elementary means (without
first calculating the contact homology) via the mean indices of closed
Reeb orbits. The following example shows that the mean Euler
characteristics can distinguish some non-diffeomorphic contact
structures within the same homotopy class.

\begin{Example}
\label{ex:U}
In \cite{U}, Ustilovsky considers a family of contact structures
$\xi_p$ on $S^{2n-1}$ for odd $n$ and positive $p\equiv \pm 1\mod 8$.
For a fixed $n$, the contact structures $\xi_p$ fall within a finite
number of homotopy classes, including the class of the standard
structure $\xi_0$.  By computing $\HC_*(S^{2n-1},\xi_p)$, Ustilovsky
proves that the structures $\xi_p$ are mutually non-diffeomorphic, and
that none of them are diffeomorphic to $\xi_0$.

It follows from \cite{U}, that $\xi_p$ can be given by a contact
form $\alpha_p$ satisfying conditions (CF1)--(CF3). Furthermore, it is
not hard to show (see below or \cite{VK:thesis}) that
\begin{equation}
\label{eq:ust}
\chi^+(S^{2n-1},\xi_p)=\frac{1}{2}
\left(
%\cdot
\frac{p(n-1)+1}{p(n-2)+2}
\right)
\end{equation}
and $\chi^-(S^{2n-1},\xi_p)=0$. The right-hand side of \eqref{eq:ust}
is a strictly increasing function of $p>0$.  Hence, the positive mean
Euler characteristic distinguishes the structures $\xi_p$ with $p>0$.
Note also that $\chi^+(\xi_p)>\chi^+(S^{2n-1},\xi_0)=1/2$ when $p>1$
and $\chi^+(\xi_1)=1/2$. In particular, $\chi^+$ distinguishes $\xi_p$
with $p>1$ from the standard structure $\xi_0$.

Formula \eqref{eq:ust} can be established in two ways, both relying on
\cite{U}.  The first way is to use the contact form $\alpha_p$
constructed in \cite{U}.  The indices of periodic orbits of $\alpha_p$
are determined in \cite{U} and the mean indices can be found in a
similar fashion or obtained using the asymptotic formula
$\Delta(x)=\lim_{k\to\infty} \MUCZ(x^k)/k$; see \cite{SZ}.  Then
\eqref{eq:Euler2} is applied to calculate
$\chi^+(S^{2n-1},\xi_p)$. (Note that this calculation becomes even
simpler when the Morse--Bott version of \eqref{eq:Euler2} is used,
reducing the left-hand side of \eqref{eq:Euler2} to just one
term for a suitable choice of contact form, see \cite{Es}.)
Alternatively one can use the definition \eqref{eq:Euler1} of $\chi^+$
and the calculation of $\dim \HC_{*}(S^{2n-1},\xi_p)$ from \cite{U};
see \cite[Section 11.1.3]{VK:thesis}.
\end{Example}

\begin{Remark} 
\label{rmk:filling}
A different version of contact homology, the linearized
  contact homology, is defined when $(W, \xi)$ is equipped with a
  symplectic filling $(M, \omega)$. The chain group for this homology
  is still described via the closed orbits of a Reeb flow for
  $\xi$. In particular, it is still generated by all iterations of
  good orbits and the odd iterations of bad ones. The differential is
  defined via the augmentation, associated with $(M, \omega)$, on the
  full contact homology differential algebra; see, e.g., \cite{BO,El}.
  Thus, the linearized contact homology, in general, depends on $(M,
  \omega)$.
  
  A key point in this construction is that one does not need to assume
  condition (CF2) in order to define the linearized contact homology.
  (When (CF2) is satisfied, the linearized contact homology coincides
  with the cylindrical contact homology.)  Hence, for fillable contact
  manifolds, Theorem \ref{thm:reeb} holds without this assumption.
  This follows immediately from the fact that our proof of the theorem
  makes no use of the specific nature of the differential.  One still
  requires the contact form $\alpha$ to satisfy (CF1) and (CF3),
  and the filling $(M, \omega)$ is assumed to be such that
  $\left<\omega,\pi_2(M)\right>=0$ and $c_1(TM)=0$.
\end{Remark}

\begin{Remark} An argument similar to the proof of Theorem
  \ref{thm:reeb} also establishes the following ``asymptotic Morse
  inequalities''
$$
{\sum}^{\pm}\frac{1}{\Delta(x_i)}
+\frac{1}{2}{\sum}^{\pm}\frac{1}{\Delta(y_i)}
\geq \limsup_{N\to\infty}\frac{1}{N}\sum_{l=l_\pm}^N\dim \HC_{\pm l}(W,\xi),
$$
provided that $\alpha$ satisfies (CF1)--(CF3) or in the setting of
Remark \ref{rmk:filling}. A similar inequality (as well as some other
relations between mean indices and actions) is proved in \cite{EH} for
convex hypersurfaces in $\R^{2n}$.
\end{Remark}

\begin{Remark} 
  Finally note that the quite restrictive requirement that the Reeb
  flow is non-degenerate and has finitely many periodic orbits can be
  relaxed in a variety of ways. For instance, the Morse--Bott version
  of Theorem \ref{thm:reeb} is proved in \cite{Es}, generalizing the
  resonance relations for geodesic flows established in \cite{Ra1}.

\end{Remark}

\subsection{Acknowledgments} The authors are grateful to Yasha
Eliashberg, Jacqui Espina, Ba\c sak G\"urel, Yael Karshon, and Anatole Katok for
useful discussions. The authors also wish to thank the referee for her/his
comments and remarks.

\section{Resonances in the Hamiltonian case}

\subsection{Resonances and subgroups of the torus}
\label{sec:res-gen}
Consider a closed subgroup $\Gamma$ of $ \T^m=\R^m/\Z^m$ which is
topologically generated by an element $\bD=(\bD_1,\ldots,\bD_m)\in
\T^m$. In other words, $\Gamma$ is the closure of the set $\{k\bD\mid
k\in\N\}$, which we will call the orbit of $\bD$. Note that $\Gamma$
is a Lie group since it is a closed subgroup of a Lie group. (We
refer the reader to, e.g., \cite{DK,Ki} for the results on Lie groups and duality
used in this section.)  Moreover, the connected component of the
identity $\Gamma_0$ in $\Gamma$ is a torus, for $\Gamma_0$ is compact,
connected and abelian.  Denote by $\CR$ the group of characters
$\T^m\to S^1$ which vanish on $\Gamma$ or equivalently on $\bD$. Thus,
$\CR$ is a subgroup of the dual group $\Z^m$ of $\T^m$.  We can think
of $\CR$ as the set of linear equations determining $\Gamma$. In other
words, $\va$ belongs to $\CR$ if and only if
$$
a_1\bD_1+\ldots + a_m\bD_m=0 \mod 1 .
$$
We will refer to $\CR$ as the group of resonances associated to
$\Gamma$.  Clearly, $\Gamma$ is completely determined by $\CR$.
When the role of $\bD$ or $\Gamma$ needs to be emphasized,
we will use the notation $\Gamma(\bD)$ and $\CR(\Gamma)$ or
$\CR(\bD)$, etc.  Furthermore, we denote by $\CR_0\supset \CR$ the group of
resonances associated to $\Gamma_0$.

We will need the following properties of $\Gamma$ and $\CR$:
\begin{itemize}

\item $\codim \Gamma=\rk \CR$;
\item $\Gamma\subset \Gamma'$ iff $\CR(\Gamma)\supset \CR(\Gamma')$;
\item $\Gamma/\Gamma_0$ and $\CR_0/\CR$ are finite cyclic groups dual to each other.
\end{itemize}
Here the second assertion is obvious. To prove the first and the last
ones, first note that $\Gamma/\Gamma_0$ is finite and cyclic since
$\Gamma$ is compact and has a dense cyclic subgroup. Further note that
$\CR$ can be identified with the dual group of $\T^m/\Gamma$ and that
the first assertion is clear when $\Gamma=\Gamma_0$, i.e., $\Gamma$ is
a torus.  Dualizing the exact sequence $0\to
\Gamma/\Gamma_0\to \T^m/\Gamma_0\to\T^m/\Gamma \to 0$ we obtain the
exact sequence $0\to\CR\to\CR_0\to \CR_0/\CR\to 0$ (see, e.g.,
\cite[Chapter 12]{Ki}) and the last assertion follows. (The dual of a
finite cyclic group, say $\Z_k$, is isomorphic to $\Z_k$.) It also
follows that $\rk \CR=\rk \CR_0=\codim \Gamma_0=\codim \Gamma$.

\subsection{Proof of Theorem \ref{thm:res}} Let $\Delta = (\Delta_1,
\ldots, \Delta_m)$ where the components $\Delta_i \in \R/2N\Z$ are the mean
indices of the $m$ periodic points of the perfect Hamiltonian
diffeomorphism $\varphi$.  Set $\bD=\Delta/2N$. Then, $\bD$ belongs to
$\T^m$ and we have $\CR(\varphi)=\CR(\bD)$. Recalling that $n<N$, we
define $\Pi$ to be the cube $(n/N,1)^m$ in $\T^m$. In other words, $\Pi$
consists of points $\theta=(\theta_1,\ldots,\theta_m)\in\T^m$ such
that $\theta_i$ is in the arc $(n/N,1)$ for all $i$. We will refer to
$\Pi$ as the \emph{prohibited region} of $\T^m$.

\subsubsection{Proof of (i)} By a standard argument, for
every $k\in \N$ at least one component of $k\bD$ is in the arc
$[0,n/N]$. (See \cite{SZ} or, e.g., \cite{GG:gaps}; we will also
briefly recall the argument in the proof of (iii) below.) In other
words, none of the points of the orbit $\{k\bD\mid k\in\N\}$ lies in
the prohibited region $\Pi$. Since $\Pi$ is open, we conclude that
$\Gamma\cap\Pi=\emptyset$. Hence, $\codim \Gamma>0$ and $\CR\neq 0$.

\subsubsection{Proof of (ii)} Here we assume that $\rk\CR=1$. Let
$\va$ be a generator of $\CR$. Then,
$\Gamma$ is given by the equation
$$
\va\cdot\theta:=\sum_i a_i \theta_i=0 \text{ in } \R/\Z,
$$
where $\theta =(\theta_1,\ldots,\theta_m)\in\T^m$. Note that $\Pi$ can
also be viewed as the product of the arcs $(-1+n/N,0)$ in $\T^m$. Thus
the intersection of $\Pi$ with a neighborhood of $(0, \ldots, 0) \in
\T^m$ fills in the (open) portion of the negative quadrant in that
neighborhood. Since $\Gamma\cap \Pi=\emptyset$, all non-zero
components $a_i$ must have the same sign. Hence, if $\va$ has at least
one positive component we have $a_i\geq 0$.

Let $L=\{ (t,\ldots,t)\mid t\in S^1\}$ be the ``diagonal''
one-parameter subgroup of $\T^m$. The point of $L$ with $t=-1/\sum
a_i$ lies in $\Gamma$. Hence, this point must be outside $\Pi$ and so
$|t|\geq 1-n/N$.  It follows that $\sum a_i\leq N/(N-n)$.

\subsubsection{Proof of (iii)} The proofs of (i) and (ii) above are
based on the observation that for every $k$, there exists a capped
$k$-periodic orbit $\by$ such that the local Floer homology of
$\varphi^k$ at $\by$ is non-zero in degree $n$:
$\HF_n(\varphi^k,\by)\neq 0$. Then $\Delta(\by) \in [0,\,2n]$. (See
\cite{Gi:conley,GG:gap,GG:gaps} for the proofs of these facts; however, the
argument essentially goes back to \cite{SZ}. Note also that here we
use the grading of the Floer homology by $\hMUCZ$,
i.e., the fundamental class has degree $n$.)  The orbit $y$ is the
$k$th iteration of some orbit $x_i$, and hence $\Delta(y)=k\Delta_i$ in
$\R / 2N \Z$.  We claim that necessarily $\Delta_i\neq 0$, provided that
$M$ is rational and, as before, $N\geq n+1$. As a consequence, the
orbits with $\Delta_i=0$ can be discarded in the proofs of (i) and
(ii).

To show that $\Delta_i\neq 0$, we argue by contradiction. Assume
the contrary: $\Delta_i=0$. Then $\Delta(\by)=0\mod 2N$ and, in fact,
$\Delta(\by) =0$, since we also have $\Delta(\by) \in [0,\,2n]$ and
$N\geq n+1$. The condition that $\Delta(\by) =0$ and
$\HF_n(\varphi^k,\by)\neq 0$ is equivalent to that $\by$ is a
symplectically degenerate maximum of $\varphi^k$; \cite{GG:gap,GG:gaps}. By
\cite[Theorem 1.18]{GG:gaps}, a Hamiltonian diffeomorphism with
symplectically degenerate maximum necessarily has infinitely many
periodic points whenever $M$ is rational. This contradicts the
assumption that $\varphi$ is perfect.

Thus, we have proved that (i) and (ii) hold with only non-zero mean
indices (in $\R / 2N \Z$) taken into account. To finish the proof of
(iii), it suffices to note that replacing $\varphi$ by $\varphi^k$, for
a suitably chosen $k$, we can make every rational mean index
zero. Since every resonance relation for $\varphi^k$ is also a
resonance relation for $\varphi$, we conclude that the irrational mean
indices of $\varphi$ satisfy a resonance relation.

\begin{Remark} The requirement that $M$ be rational enters the proof
  of (iii) only at the last point where \cite[Theorem 1.18]{GG:gaps}
  is utilized.  The role of this requirement in the proof of this
  theorem is purely technical and it is likely that the requirement
  can be eliminated. Note also that we do not assert that (ii) holds
  when only irrational mean indices are considered. However, an
  examination of the above argument shows that the following is
  true. Assume that the resonance group for the irrational mean
  indices has rank one for a perfect Hamiltonian diffeomorphism
  $\varphi$. Then these mean indices satisfy a non-trivial resonance
  relation of the form $r\vb$, where $r$ is a natural number, $b_i\geq
  0$ for all $i$, and $\sum b_i\leq N/(N-n)$.
\end{Remark}

\subsection{Perfect Hamiltonian flows on $\CP^n$.}
In this section, we state (without proof) another result asserting,
roughly speaking, that $\sum \Delta_i=0$ for perfect flows on $\CP^n$
satisfying some additional, apparently generic, requirements.

Consider an autonomous, Morse Hamiltonian $H$ on $\CP^n$ with
exactly $n+1$ critical points $x_0,\ldots, x_n$.  Let us call
$\tau\in\R$, $\tau> 0$, a \emph{critical} period if at least one of the
critical points of $H$ is degenerate when viewed as a $\tau$-periodic
orbit of $H$ or equivalently as a fixed point of $\varphi^\tau_H$. We
denote the collection of critical times by $C_H\subset \R$ and call
$t\in (0,\infty)\ssminus C_H$ \emph{regular}.  Assume furthermore that
for every regular $t>0$ the points $x_0,\ldots, x_n$ are the only
fixed points of $\varphi_H^t$ and that for every critical time
$\tau>0$ at least one of the points $x_i$ is non-degenerate as a
fixed point of $\varphi_h^{\tau}$. \emph{Then $\sum \Delta(x_i,t)=0$ for
any regular $t>0$, where $\Delta(x_i,t)$ is the mean index of $\varphi^t_H$
at $x_i$ equipped with trivial capping.}
  
In particular, $\sum\Delta_i=0$ in the setting of
Theorem \ref{thm:res} with $\varphi=\varphi^t_H$. (To ensure that
$\varphi$ satisfies the hypotheses of the theorem it suffices to
require that $kt\not\in C_H$ for all $k\in\N$.)  A quadratic
Hamiltonian on $\CP^n$ with $n\geq 2$ from Example \ref{ex:CPn}
meets the above conditions for generic eigenvalues $\lambda_i$ or,
more precisely, if and only if $H$ generates a Hamiltonian action of a
torus of dimension greater than one.

The proof of this result, to be detailed elsewhere, goes beyond the
scope of the present paper. The argument is conceptually similar to the proof of
\cite[Theorem 1.12]{GG:gaps} but is technically more involved, for it
relies on a more delicate version of Ljusternik--Schnirelman theory
than the one considered in that paper.

\section{Reeb flows: the proof of Theorem \ref{thm:reeb}}
\label{sec:contact}

Our goal in this section is to prove Theorem \ref{thm:reeb}. We focus
on establishing the result for $\chi^+(W,\xi)$. The case of
$\chi^-(W,\xi)$ can be handled in a similar fashion.

First recall that for every periodic orbit $x$ of the Reeb flow, we have
\begin{equation}
\label{eq:index1}
|\MUCZ(x^k)-k\Delta(x)|<n-1
\end{equation}
(see, e.g., \cite{SZ}), and hence
\begin{equation}
\label{eq:index2}
-2<|x^k|-k\Delta(x)<2n-4.
\end{equation}
In particular, it follows from \eqref{eq:index2} that condition (CF3)
implies condition (CH) with $l_+=2n-4$ and $l_-=-2$. Moreover, the
dimension of $\CC_l(W,\alpha)$ with $l\geq l_+$ or $l\leq l_-$ is
bounded from above by a constant independent of $l$.

To simplify the notation, let us set 
$C_l:=\CC_l(W,\alpha)$. Denote by $C_*^{(N)}$ the complex $C_*$ truncated
from below at $l_+$ and  from above at $N>l_+$. In other words, 
$$
C_l^{(N)}=
\begin{cases}
C_l &\text{when  $l_+\leq l\leq N$,}\\
0 &\text{otherwise.}
\end{cases}
$$
The complex $C_*^{(N)}$ is generated by the iterations $x_i^k$ and
$y_i^k$ (with odd $k$) such that $|x_i^k|$ and $|y_i^k|$ are in the
range $[l_+,\,N]$. Note that by \eqref{eq:index2} this can only happen
when $\Delta(x_i)>0$ and $\Delta(y_i)>0$. By \eqref{eq:index2} again,
an orbit $x_i^k$ or $y_i^k$ with positive mean index $\Delta$ is in
$C_*^{(N)}$ for $k$ ranging from some constant (depending on the
orbit, but not on $N$) to roughly $N/\Delta$, up to a constant
independent of $N$.  Furthermore, the parity of $|x_i^k|$ and
$|y_i^k|$ (odd $k$) is independent of $k$, i.e.,
$\sigma(x_i^k)=\sigma(x_i)$ and $\sigma(y_i^k)=\sigma(y_i)$. Thus, the
contribution of the iterations of $x_i$ to the Euler characteristic
$$
\chi\big(C_*^{(N)}\big):=\sum(-1)^l\dim C_l^{(N)}=\sum_{l=l_+}^N(-1)^l\dim C_l
$$
is $\sigma(x_i)N/\Delta(x_i)+ O(1)$ as $N\to\infty$. Likewise, the
contribution of the iterations of $y_i$ is $\sigma(y_i)N/2\Delta(y_i)+
O(1)$, since $k$ assumes only odd values in this case.  Summing up over
all $x_i$ and $y_i$ with positive mean index, we have
$$
\chi\big(C_*^{(N)}\big)=N\Bigg(
{\sum}^+\frac{\sigma(x_i)}{\Delta(x_i)}
+\frac{1}{2}{\sum}^+\frac{\sigma(y_i)}{\Delta(y_i)}
\Bigg)
+ O(1),
$$
and hence
$$
\lim_{N\to\infty}\frac{\chi\big(C_*^{(N)}\big)}{N}
=
{\sum}^+\frac{\sigma(x_i)}{\Delta(x_i)}
+\frac{1}{2}{\sum}^+\frac{\sigma(y_i)}{\Delta(y_i)}.
$$

To finish the proof it remains to show that 
\begin{equation}
\label{eq:chi}
\chi^+(W,\xi)=\lim_{N\to\infty} \chi\big(C_*^{(N)}\big)/N,
\end{equation}
which is nearly obvious. Indeed, 
by the very definition of $C_*^{(N)}$, we have 
$H_l\big(C_*^{(N)}\big)=\HC_l(W,\xi)$ when $l_+<l<N$. Furthermore,
$|H_N\big(C_*^{(N)}\big)-\HC_N(W,\xi)|=O(1)$ since $\dim C_N=O(1)$.
Hence, 
$$
\chi\big(C_*^{(N)}\big)=\sum_l(-1)^l\dim H_l\big(C_*^{(N)}\big)
=\sum_{l=l_+}^N (-1)^l\dim\HC_l(W,\xi)+O(1)
$$
and \eqref{eq:chi} follows. This completes the proof of the theorem.

\end{document}